\newtheorem{theorem}{Theorem}[section]
\newtheorem{lemma}[theorem]{Lemma}
\newtheorem{proof of lemma}[theorem]{Proof of Lemma}
\newtheorem{proposition}[theorem]{Proposition}
\theoremstyle{definition}
\newtheorem{remark}[theorem]{Remark}
\numberwithin{equation}{section}
\begin{document}

\title{Injectivity of spherical mean on M\'{e}tivier Group}

\author{Rupak Kumar Dalai and R. K. Srivastava}

\address{Department of Mathematics, Indian Institute of Technology, Guwahati, India 781039.}

\email{rupak.dalai@iitg.ac.in, rksri@iitg.ac.in}

\subjclass[2000]{Primary 43A80; Secondary 44A35}

\date{\today}

\keywords{M\'{e}tivier Group, Spherical mean, Laguerre function, special Hermite expansion}

\maketitle

\begin{abstract}
In this article, we study the injectivity of the spherical mean
for continuous functions on the M\'{e}tivier group.
The spherical mean
is injective for $f(z, .)\in L^p(\mathbb{R}^m),~1\leq p \leq 2$ with
tempered growth in $z$ variable.
This result is also true for
a class of functions in $L^p(\mathbb{C}^n),\,1\leq p\leq\infty$
without tempered growth.
Further, we obtain a two-radii theorem for functions on the M\'{e}tivier group,
which are tempered in $z$ variable and periodic in the centre variable.
\end{abstract}

\section{Introduction}
In integral geometry, it is an interesting question to know if the average
of a continuous function $f$ over all translates of a surface can determine $f.$
Particularly, when does the operator $f$ into $f\ast \mu_r$ turn out to be injective,
where $\mu_r$ is the normalised surface measure on $\{x\in\mathbb{R}^n:|x|= r\}.$
In general, the answer to this is negative, since there are non-trivial bounded
continuous functions, e.g. Bessel functions $\varphi$ for which $\varphi\ast \mu_r=0,$
when $r$ is a zero of the Bessel function.
The injectivity of the spherical mean is an ever interesting question and
studied by several authors including \cite{ABCP,ST,T2,T5,Z}. Thangavelu \cite{T2}
has shown that the one radius theorem is true for $L^p(\mathbb{R}^n)$ when
$1\leq p\leq 2n/(n-1),$ by exploiting the spectral decomposition of the Laplacian.

\smallskip
The above question (one radius theorem) was also considered for the Heisenberg group
$\mathbb{H}^n\simeq\mathbb{C}^n\times\mathbb{R}.$ Indeed, in \cite{T2} it is
shown that if $f\in L^p(\mathbb{H}^n),1\leq p<\infty,$ then $f\ast \mu=0$ implies $f=0,$
where $\mu$ is a compactly supported rotation invariant probability measure
with no mass at the centre. The proof of this result is based on a summability
result due to Strichartz \cite{SR} for sub-Laplacian on $\mathbb{H}^n.$

\smallskip

Although, in the M\'{e}tivier group, denoted by $G\simeq
\mathbb{C}^n\times\mathbb{R}^m,$ the analogue to summability result \cite{SR}
is yet to settle due to appearance of a multi-parameter singular integral due to
higher dimensional centre, whose kernel need not be a Calder\'{o}n-Zygmund kernel.
However, we show that the mean operator $f$ into $f\ast \mu$ is
injective when $f(z, .)\in L^p(\mathbb{R}^m),1\leq p \leq 2$ and $f$ is of tempered
growth in $z$ variable.
This result is obtained employ the simplified
$\lambda$-twisted spherical mean on the M\'{e}tivier group, which we
introduced in \cite{DGS} and the special Hermite
expansion as discussed in Section \ref{sec201}. Moreover, when $\mu=\mu_r,$
we prove one radius theorem for continuous functions $f$ when
$f(z, .)\in L^{p}(\mathbb{R}^m),1\leq p \leq 2$ and
$f^{\lambda}(z)e^{\frac{1}{4}\left|J_\lambda z_{\lambda}\right|^{2}}$ is in
$L^{p'}(\mathbb{C}^{n}),1\leq p'\leq\infty.$
At the end, we observed a two radii theorem for tempered continuous
functions in $z$ variable and $2\pi$-periodic in $t$ variable.

\smallskip

It is known that the symplectic bilinear form appears in the group action of
the M\'{e}tivier group is far from $U(n)$-invariance, due to its higher
dimensional centre, the $\lambda$-twisted spherical cannot be radialised
as in the case of the Heisenberg group. However, it is elliptical
up to a rotation \cite{DGS}. We connect this elliptical mean to the
twisted spherical mean of a Lie group having $3n$-dimensional step two
nilpotent Lie algebra. This fact unfolds many tools for studying
the spherical mean in the M\'{e}tivier group. We obtain the spectral
decomposition for $L^2$-functions in terms of
eigenfunctions of sub-Laplacian on this particular Lie group.
This reduction eases towards proving an analogue of one radius
theorem on the general M\'{e}tivier group.

\section{Preliminaries and Auxiliary results}\label{sec202}
Let $G$ be a connected, simply connected Lie group with real step two nilpotent
Lie algebra $\mathfrak g.$ Then $\mathfrak g$ has the orthogonal decomposition
$\mathfrak g=\mathfrak b\oplus\mathfrak z,$ where $\mathfrak z$ is the centre.
Since $\mathfrak g$ is nilpotent, the exponential map
$\exp:\mathfrak g\rightarrow G$ is surjective. Hence $G$ is parameterised
by $\mathfrak g$ and endowed with the exponential coordinates. We identify
$X+T\in \mathfrak{b}\oplus \mathfrak{z}$ with $\exp(X+T)$ and denote it by
$(X,T)\in \mathbb{R}^d \times \mathbb{R}^m$. Since $[\mathfrak{b},\mathfrak{b}]
\subseteq\mathfrak{z}$ and $[\mathfrak{b},[\mathfrak{b},\mathfrak{b}]]=0$, by the
Baker-Campbell-Hausdorff formula, the group law on $G$ expressed as
\[(X,T)(Y,S)=(X+Y,T+S+\frac{1}{2}[X,Y]),\]
where $X,Y\in \mathfrak{b}$ and $T,S\in\mathfrak{z}$.
For $\omega\in\mathfrak z^\ast,$ consider the skew-symmetric bilinear form
$B_\omega$ on $\mathfrak b$ by $B_\omega(X,Y)=\omega\left([X,Y]\right).$
Then $B_\omega$ is called a non-degenerate bilinear form when
$r_\omega=\left\{X\in\mathfrak b:B_\omega(X,Y)=0,~\forall~ Y\in\mathfrak b\right\}$ is trivial.

\smallskip

We say group $G$ is M\'{e}tivier group if $B_\omega$ is
non-degenerate for all nonzero $\omega\in\mathfrak z^\ast.$ In such cases, the dimension of
$\mathfrak{b}$ is even, say $d=2n.$
Let $B_1,\ldots, B_{2n}$ and $Z_1,\ldots, Z_m$ be orthonormal bases for $\mathfrak{b}$
and $\mathfrak{z},$ respectively. Since $[\mathfrak{b},\mathfrak{b}]\subseteq \mathfrak{z},$
there exist scalars $U_{j,l}^{(k)}$ such that
\[[B_j,B_l]=\sum_{k=1}^m U_{j,l}^{(k)}Z_k,\quad\mbox{where } 1\leq j,l\leq 2n
\mbox{ and }1\leq k\leq m.\]
For $1\leq k\leq m,$ define ${2n\times 2n}$ skew-symmetric matrices by
$U^{(k)}=(U_{j,l}^{(k)}).$ Then the group law  for the M\'{e}tivier group can precisely
be expressed  as
\begin{align}\label{}
(x,t)\cdot(\xi,\tau)=\binom{x_i+\xi_i,~i=1,\ldots,2n}{~t_j+\tau_j+\frac{1}{2}
\langle x, U^{(j)}\xi\rangle,~j=1,\ldots,m },
\end{align}
where $x,\xi\in\mathbb R^{2n}$ and $t,\tau \in \mathbb R^m.$
Left-invariant vector fields for the Lie algebra of the M\'{e}tivier group $G$ computed as
\[X_j=\dfrac{\partial}{\partial x_j}+ \frac{1}{2}\sum_{k=1}^m\left(\sum_{l=1}^n
\left(x_lU_{l,j}^{(k)}+x_{n+l}U_{n+l,j}^{(k)}\right)\right)\dfrac{\partial}{\partial t_k},\]
\[X_{n+j}=\dfrac{\partial}{\partial x_{n+j}}+ \frac{1}{2}\sum_{k=1}^m\left(\sum_{l=1}^n
\left(x_lU_{l,n+j}^{(k)}+x_{n+l}U_{n+l,n+j}^{(k)}\right)\right)\dfrac{\partial}{\partial t_k},\]
and $T_k=\dfrac{\partial}{\partial t_k},$ where
$(x,t)=(x_1,\ldots, x_n,x_{n+1},\ldots,x_{2n},t_1,\ldots,t_m)\in \mathbb R^{2n}\times \mathbb R^m,~j=1,\ldots,n$ and $k=1,\ldots,m.$
As $U^{(k)}$'s are skew-symmetric, we obtain the following commutation relations
\[[X_i,X_j]=\sum_{k=1}^mU_{i,j}^{(k)}\dfrac{\partial}{\partial t_k},\quad[X_{n+i},X_{n+j}]
=\sum_{k=1}^mU_{n+i,n+j}^{(k)}\dfrac{\partial}{\partial t_k},\quad\text{ for } ~i,j=1,\ldots,n.\]
Since $U^{(1)},\ldots,U^{(m)}$ are linearly independent, the dimension of the space
spanned by $\left\{(U_{i,j}^{(1)},\ldots,U_{i,j}^{(m)}):~i,j=1,\ldots,n\right\}$ will be $m.$

Let $\mu_r$ be the normalised surface measure on $\{(x,0):|x|=r\}\subset G.$
Then the spherical mean of a function $f\in L^1(G)$ is defined as
\begin{equation}\label{exp212}
f\ast \mu_r (x,t)=\int_{|\xi|=r} f\left((x,t)\cdot(-\xi,0)\right)d\mu_r(\xi).
\end{equation}
Denote $\mathbb{R}^l_\ast=\mathbb{R}^l\setminus \{0\},~l\in\mathbb N.$
For $\lambda\in \mathbb{R}^m_\ast,$ let $f^\lambda(z)=
\int_{\mathbb{R}^m} f(x,t)e^{i\lambda\cdot t}dt$ be the inverse Fourier transform
of $f$ in $t$ variable, then
\begin{align*}
(f\ast \mu_r)^\lambda (x)=\int_{\mathbb{R}^m} f\ast\mu_r(x,t)e^{i\lambda\cdot t}dt
=\int_{|\xi|=r}f^\lambda (x-\xi)e^{\frac{i}{2}\sum_{j=1}^{m}\lambda_j\langle x,U^{(j)}\xi\rangle}d\mu_r(\xi).
\end{align*}
Let us define the $\lambda$-twisted spherical mean of $f\in L^1(\mathbb{R}^{2n})$ by
\begin{equation}\label{exp213}
f\times_\lambda\mu_r(x)=\int_{|\xi|=r}f(x-\xi)
e^{\frac{i}{2}\sum_{j=1}^{m}\lambda_j\langle x,U^{(j)}\xi\rangle}d\mu_r(\xi).
\end{equation}
Then the spherical mean $f\ast \mu_r$ on the M\'{e}tivier group $G$ can be studied by
$\lambda$-twisted spherical mean $f^\lambda \times_\lambda\mu_r$ on $\mathbb{R}^{2n}.$
\smallskip

For $\lambda\in\mathbb{R}^m_\ast,$ the skew-symmetric matrix
$V_\lambda=\sum_{j=1}^m \lambda_jU^{(j)}$ is non-singular\cite{MS}.
Let $u_1\pm iv_1,\ldots,u_n\pm iv_n$ be the eigenvectors of $V_\lambda$
with corresponding eigenvalues $\pm i\mu_{\lambda,1},\ldots,\pm i\mu_{\lambda,n}$
satisfying $\mu_{\lambda,1}\geq \cdots \geq\mu_{\lambda,n} > 0.$ Define
$A_\lambda=\left(\sqrt 2~ v_1,\ldots,\sqrt 2~v_n,\sqrt 2~u_1,\ldots,\sqrt 2~u_n\right).$ Then
$A_\lambda$ is an orthogonal matrix that satisfies $V_\lambda A_\lambda=A_\lambda U_\lambda,$
where
\begin{align}\label{exp217}
U_\lambda=\left(
\begin{array}{cc}
0_n & -J_\lambda \\
J_\lambda  & 0_n
\end{array}
\right)
\end{align}
with $J_\lambda=\text{diag}(\mu_{\lambda,1},\ldots,\mu_{\lambda,n})$ and $0_n$
is zero matrix of order $n.$ In view of (\ref{exp217}), we have
\begin{equation}\label{exp221}
\sum_{j=1}^m\lambda_j\langle x,U^{(j)}\xi\rangle=
\langle x,V_\lambda\xi\rangle=\langle A_\lambda^tx, U_\lambda A_\lambda^t\xi\rangle,
\end{equation}
where $A_\lambda A_\lambda^t=I.$ For
$x=(x_1,\ldots,x_n,x_{n+1},\ldots,x_{2n})\in \mathbb R^{2n},$ we write
$z=(z_1,\ldots,z_n)=(x_1+ix_{n+1},\ldots,x_n+ix_{2n})$ and say $z$
be the complexification of $x.$ Thus, after complexifying (\ref{exp221}), we get
\begin{equation}\label{exp215}
\sum_{j=1}^m\lambda_j\text{Re}\,\left(z\cdot\overline{U^{(j)}w}\right)
=\sum_{j=1}^n\mu_{\lambda,j} \text{ Im}\left((z_\lambda)_j\cdot(\bar{w}_\lambda)_j\right),
\end{equation}
where $z_\lambda$ and $w_\lambda$ are complexification of $A_\lambda^tx$ and
$A_\lambda^t\xi$ respectively. The following lemma would rationalise the $\lambda$-twisted
spherical mean (\ref{exp213}) to a simpler mean.
\begin{lemma}\label{lemma201}\emph{\cite{DGS}}
Let $f_\lambda(x)=f(A_\lambda x)$ and $z,\tilde{z}_\lambda\in \mathbb C^n$ be
the complexification of $x,A_\lambda x \in \mathbb R^{2n}$ respectively. Then
$f\times_\lambda\mu_r(\tilde{z}_\lambda)=f_\lambda \tilde{\times}_\lambda\mu_r(z),$
where
\begin{align}\label{exp216}
f_\lambda \tilde{ \times}_\lambda ~\mu_r(z)=\int_{|w|=r}~f_{\lambda}(z-w)
~e^{\frac{i}{2}\sum_{j=1}^n\mu_{\lambda,j} \text{ Im}\left(z_j\cdot\,\bar{w}_j\right)}~d\mu_r(w).
\end{align}
\end{lemma}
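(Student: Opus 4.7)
The plan is to start from the definition (\ref{exp213}) of the $\lambda$-twisted spherical mean, evaluate it at the point $A_\lambda x \in \mathbb{R}^{2n}$ whose complexification is $\tilde z_\lambda$, and then perform the orthogonal change of variables $\xi = A_\lambda w$ on the sphere $\{|\xi|=r\}$. Since $A_\lambda$ is orthogonal, this substitution preserves both the radius (so the sphere maps to itself) and the normalised surface measure $d\mu_r$. The amplitude factor transforms simply: $f(A_\lambda x - \xi) = f(A_\lambda(x-w)) = f_\lambda(x-w)$.

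The main work is in the phase. First I would rewrite the symplectic sum as in (\ref{exp221}): $\sum_{j=1}^{m}\lambda_j \langle A_\lambda x, U^{(j)}(A_\lambda w)\rangle = \langle A_\lambda x, V_\lambda A_\lambda w\rangle$. Using the intertwining relation $V_\lambda A_\lambda = A_\lambda U_\lambda$ together with orthogonality $A_\lambda^{t} A_\lambda = I$, this inner product collapses to $\langle x, U_\lambda w\rangle$. With the block form of $U_\lambda$ in (\ref{exp217}) at hand, it is then routine to verify that, writing $x = (x^{(1)}, x^{(2)})$ and $w = (w^{(1)}, w^{(2)})$ with $x^{(i)}, w^{(i)} \in \mathbb{R}^n$,
\[
\langle x, U_\lambda w\rangle \;=\; \sum_{j=1}^{n}\mu_{\lambda,j}\bigl(x_j^{(2)} w_j^{(1)} - x_j^{(1)} w_j^{(2)}\bigr) \;=\; \sum_{j=1}^{n}\mu_{\lambda,j}\,\mathrm{Im}(z_j\bar{w}_j),
\]
where on the right $z_j = x_j^{(1)} + i x_j^{(2)}$ and $w_j$ denotes the complexification of $w$. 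This is exactly the complexified identity (\ref{exp215}) applied to the present situation, so one may either invoke (\ref{exp215}) directly or check the single line of algebra above.

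Assembling the two pieces gives
\[
f\times_\lambda \mu_r(A_\lambda x) \;=\; \int_{|w|=r} f_\lambda(x-w)\, e^{\frac{i}{2}\sum_{j=1}^{n}\mu_{\lambda,j}\,\mathrm{Im}(z_j\bar w_j)}\, d\mu_r(w) \;=\; f_\lambda\,\tilde\times_\lambda\, \mu_r(z),
\]
which is the claimed identity once we recall that $\tilde z_\lambda$ is the complexification of $A_\lambda x$. The only potential pitfall is bookkeeping between real points in $\mathbb{R}^{2n}$ and their complexifications in $\mathbb{C}^n$; I expect no analytic difficulty, since the lemma is essentially a change of variables combined with the algebraic normalisation of the skew form $V_\lambda$ into the standard block form $U_\lambda$.
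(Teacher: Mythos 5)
Your proof is correct. The paper itself does not prove this lemma --- it is quoted from \cite{DGS} --- but the argument you give is exactly the one the surrounding text sets up: the orthogonal substitution $\xi=A_\lambda w$ preserves the sphere and the normalised surface measure, and the phase reduction $\langle A_\lambda x, V_\lambda A_\lambda w\rangle=\langle x,U_\lambda w\rangle=\sum_{j=1}^n\mu_{\lambda,j}\,\mathrm{Im}(z_j\bar w_j)$ is precisely the content of (\ref{exp221}) and (\ref{exp215}) with the block form (\ref{exp217}); your sign bookkeeping for $\mathrm{Im}(z_j\bar w_j)$ under the convention $z_j=x_j+ix_{n+j}$ checks out.
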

We say $f_\lambda \tilde{ \times}_\lambda ~\mu_r$ as modified $\lambda$-twisted spherical mean.

\section{Twisted spherical mean and spectral decomposition}\label{sec201}
In this section, we perceive that there is a Lie group with real $3n$-dimensional
step two nilpotent Lie algebra whose twisted spherical mean close with
$f_\lambda \tilde{ \times}_\lambda ~\mu_r.$ We look for eigenfunctions of sub-Laplacian
on this particular group, and via that, obtain the spectral decomposition for $L^2$-functions.
We derive some auxiliary results related to the special Hermite functions.

Consider the group $\tilde{G}\simeq\mathbb{R}^{2n}\times\mathbb{R}^n$ as
$\left\{(x,y, t):x,y,t\in\mathbb{R}^n\right\}$
equipped with the group law
\[(x, y, t)\cdot(x', y', t')=\left(x+x', y+y', t+t'+\frac{1}{2}(x'\cdot y-y'\cdot x)\right).\]
The group $\tilde{G}$ is a $3n$-dimensional M\'{e}tivier group with a basis of left-invariant
vector fields
\begin{align}
X_j=\frac{\partial}{\partial x_j}+\frac{1}{2}y_{j}\frac{\partial}{\partial t_j},\quad
Y_{j}=\frac{\partial}{\partial y_{j}}-\frac{1}{2}x_{j}\frac{\partial}{\partial t_j}
\quad\mbox{and }T_j=\frac{\partial}{\partial t_j},
\end{align}
where $j=1,\dots, n.$ The sub-Laplacian on $\tilde{G}$ is
\[\mathcal{L}=-\sum_{j=1}^{n}\left(X_{j}^{2}+Y_{j}^{2}\right).\]
For each ${{\lambda'}}\in\mathbb{R}^n_\ast,$ we can see that the operator
$\pi_{\lambda'}(x,y,t)$ acting on $L^2(\mathbb{R}^n)$ by
\begin{equation}
\pi_{\lambda'}(x,y,t)\phi(\xi)=e^{i\sum_{j=1}^{n}{{\lambda'}_j}t_j+i
\sum_{j=1}^{n}{{\lambda'}_j}(x_j\xi_j+\frac{1}{2}x_jy_j)}\phi(\xi+y)
\end{equation}
are all possible irreducible unitary representation of $\tilde{G},$
where $\phi\in L^2(\mathbb{R}^n).$ If $\pi_{{\lambda'}}(z)=\pi_{{\lambda'}}(z, 0)$, then
$\pi_{{\lambda'}}(z, t)=e^{i {\lambda'} t} \pi_{{\lambda'}}(z).$ Identifying $\tilde{G}$
with $\mathbb{C}^{n}\times\mathbb{R}^n,$ let $L_{{\lambda'}}$ be the operator defined by
$\mathcal{L}\left(e^{i {\lambda'}\cdot t} f(z)\right)=e^{i {\lambda'}\cdot t} L_{{\lambda'}} f(z),$
where $z=x+iy.$
Then $L_{{\lambda'}}$ can precisely be expressed as
\begin{equation}\label{exp223}
L_{{\lambda'}}=-\Delta_{z}+\frac{1}{4} \sum_{j=1}^{n}{{\lambda'}_j}^{2}|z_j|^{2}+i\mathcal{N}_{\lambda'},
\quad\mbox{where }\mathcal{N}_{\lambda'}=\sum_{j=1}^{n}{{\lambda'}_j}
\left(x_j\frac{\partial}{\partial y_{j}}-y_{j}\frac{\partial}{\partial x_j}\right).
\end{equation}
Let $f\in L^1(\tilde{G})$ and $f^{\lambda'}(z)=\int_{\mathbb{R}^n} f(z,t)e^{i{\lambda'}\cdot t}dt$
be the inverse Fourier transform of $f$ in the $t$ variable. Then, for this particular
group $\tilde{G}$  the ${\lambda'}$-twisted spherical mean can be explicitly calculated by
\begin{equation}\label{exp205}
f^{\lambda'}\times_{\lambda'}\mu_r(z)=\int_{|w|=r}f^{\lambda'}(z-w)
e^{\frac{i}{2}\sum_{j=1}^{n}{{\lambda'}_j}\text{ Im}\left(z_j\cdot\,\bar{w}_j\right)}d\mu_r(w).
\end{equation}
Similarly, if $f,g\in L^1(\tilde{G}),$ then we can also define the ${\lambda'}$-twisted convolution as
\begin{equation}
f^{\lambda'}\times_{\lambda'} g^{\lambda'}(z)=\int_{\mathbb{C}^n}f^{\lambda'}(z-w)g^{\lambda'}(w)
e^{\frac{i}{2}\sum_{j=1}^n{{\lambda'}_j}Im(z_j.\bar{w}_j)}dw.
\end{equation}

\begin{remark}\label{rem201}
For any ${\lambda}\in \mathbb{R}^m_\ast,m\geq2,$ the modified
${\lambda}$-twisted spherical mean (\ref{exp216}) coincides with the
${\lambda'}$-twisted spherical mean (\ref{exp205}), where
$\lambda'\in\mathbb{R}_+^n$ and each of its coordinate can be
identified with the imaginary part of an eigenvalue of $V_\lambda.$
Therefore, studying the injectivity of spherical mean on an
arbitrary M\'{e}tivier group $G,$ it is enough to consider
spherical mean on $\tilde{G}.$
\end{remark}

For $\alpha\in\mathbb{Z}^n_+,$ let $\Phi_{\alpha}(x)=\Pi_{j=1}^{n}
h_{\alpha_{j}}\left(x_{j}\right),$ where $h_{\alpha_{j}}$ are normalised
Hermite functions on $\mathbb R.$ Then $\Phi_{\alpha}$ is an eigenfunction
of Hermite operator $H=-\Delta+|x|^2$ with eigenvalue $(2|\alpha|+n).$
For more details, see \cite{T3}. Moreover, for
${{\lambda'}}=({{\lambda'}_1},\ldots,{{\lambda'}_n})\in\mathbb{R}^n\setminus\{0\}$
if we define
\[\Psi_\alpha^{\lambda'}(x)=\prod_{j=1}^n|{{\lambda'}_j}|^\frac{1}{4}h_{\alpha_{j}}
\left(\sqrt{|{{\lambda'}_j}|}x_{j}\right),\]
then $\Psi_\alpha^{\lambda'}$ are the eigenfunctions of the elliptic Hermite operator
$H_{\lambda'}=-\Delta+\sum\limits_{j=1}^{n}({{\lambda'}_j}x_j)^2$
with eigenvalues $\sum\limits_{j=1}^n(2\alpha_j+1)|{{\lambda'}_j}|.$
Thus,
\begin{equation*}
L_{\lambda'}\left(\pi_{{\lambda'}}(z)\Psi_\alpha^{\lambda'},\Psi_\beta^{\lambda'}\right)
=\sum_{j=1}^n\left(2\alpha_j+1\right)|{{\lambda'}_j}|
\left(\pi_{{\lambda'}}(z)\Psi_\alpha^{\lambda'},\Psi_\beta^{\lambda'}\right).
\end{equation*}
For $\alpha,\beta\in\mathbb{Z}^n_+,$ define the function as
\begin{equation*}
\Psi_{\alpha\beta}^{\lambda'}(z)=\left(\prod_{j=1}^n\sqrt{\frac{{|{\lambda'}_j|}}{2\pi}}\right)
\left(\pi_{{\lambda'}}(z)\Psi_\alpha^{\lambda'},\Psi_\beta^{\lambda'}\right).
\end{equation*}
Then $\Psi_{\alpha\beta}^{\lambda'}$ are eigenfunctions of the operator
$-\Delta_{z}+\frac{1}{4} \sum\limits_{j=1}^{n}{{\lambda'}_j}^{2}|z_j|^{2}.$
The set $\{\Psi_{\alpha\beta}^{\lambda'}:\alpha,\beta\in\mathbb{Z}^n_+\}$ form a complete orthonormal
set for $L^2(\mathbb{C}^n).$

\smallskip
Next, we come up with some identities for $\Psi_{\alpha\beta}^{\lambda'}$
which can be derived by a suitable change of variables in the special
Hermite function.

\smallskip

Recall that the Laguerre function $\varphi^{n-1}_k$ on $\mathbb{C}^n$ is given by
$$\varphi_{k}^{n-1}(z)=L_k^{n-1}\left(\frac{1}{2}|z|^2\right)
e^{-\frac{1}{4}|z|^2},$$
where $L_k^{n-1}$ are Laguerre polynomials of type $(n-1).$ For $\lambda'\in\mathbb{R}^n_+,$ define
$\vartheta_{k,{\lambda'}}^{n-1}(z)=\varphi_k^{n-1}(\sqrt{|{{\lambda'}|}} z),$
where the notation $\sqrt{|{{\lambda'}|}}z$ is fixed by
\begin{equation}\label{exp203}
\sqrt{|{{\lambda'}}|}z:=\left(\sqrt{|{{\lambda'}_1}|}z_1,\ldots,\sqrt{|{{\lambda'}_n}|}z_n\right).
\end{equation}
As similar to special Hermite function, $\Psi_{\alpha \alpha}^{\lambda'}$ can be expressed in terms of Laguerre functions as
\begin{equation}\label{exp206}
\Psi_{\alpha \alpha}^{\lambda'}(z)=(2 \pi)^{-\frac{n}{2}} \Pi_{j=1}^{n} L_{\alpha_{j}}^{0}\left(\frac{1}{2}\left|{{\lambda'}_j}z_{j}\right|^{2}\right) e^{-\frac{1}{4}\left|{{\lambda'}_j}z_{j}\right|^{2}}.
\end{equation}
Then we can derive the formula
\begin{equation}\label{exp214}
\left(\prod_{j=1}^n\sqrt{|{{\lambda'}_j}|}\right)\sum_{|\alpha|=k}\Psi_{\alpha\alpha}^
{\lambda'}(z)=(2\pi)^{-\frac{n}{2}}\vartheta_{k,{\lambda'}}^{n-1}(z).
\end{equation}
Let $f\in L^2(\mathbb C^n),$ then in view of (\ref{exp214}), and the completeness of
$\Psi_{\alpha\beta}^{\lambda'}$'s in $L^2(\mathbb{C}^n),$
$f$ will satisfy the identity
\begin{equation*}
\sum_{|\alpha|=k}\sum_\beta\left(f,\Psi_{\alpha\beta}^{\lambda'}\right)
\Psi_{\alpha\beta}^{\lambda'}(z)=\prod_{j=1}^n\frac{|{{\lambda'}_j}|}{2\pi}
\int_{\mathbb{C}^n}f(w)\vartheta_{k,{\lambda'}}^{n-1}(z-w)e^{\frac{i}{2}
\sum_{j=1}^n{{\lambda'}_j}Im(z_j.\bar{w}_j)}dw.
\end{equation*}
The right-hand side is simply $\left(\prod_{j=1}^n\frac{|{{\lambda'}_j}|}{2\pi}\right)\vartheta_{k,{\lambda'}}^{n-1}
\times_{\lambda'} f(z)$ and $\vartheta_{k,{\lambda'}}^{n-1}\times_{\lambda'}f(z)=f\times_{\lambda'}\vartheta_{k,{\lambda'}}^{n-1}(z),$ we have
\begin{equation}\label{exp220}
f(z)=\left(\prod_{j=1}^n\frac{|{{\lambda'}_j}|}{2\pi}\right)\sum_{k=0}^\infty f\times_{\lambda'}\vartheta_{k,{\lambda'}}^{n-1}(z).
\end{equation}
The next proposition shows that the ${\lambda'}$-twisted spherical mean of $\vartheta_{k,{\lambda'}}^{n-1}$
will satisfy the following functional relation.
\begin{proposition}\label{prop202}
Denote $\vartheta_{k,{\lambda'}}^{n-1}(r)=\vartheta_{k,{\lambda'}}^{n-1}(w)$
for $|w|=r.$ Then
\begin{equation}\label{exp229}
\vartheta_{k,{\lambda'}}^{n-1}\times_{\lambda'}\mu_r(z)=
\left(\prod_{j=1}^n\frac{1}{\sqrt{{\lambda'}_j}}\right)\frac{k !(n-1) !}
{(k+n-1) !}\vartheta_{k,{\lambda'}}^{n-1}(z)\vartheta_{k,{\lambda'}}^{n-1}(r).
\end{equation}

\end{proposition}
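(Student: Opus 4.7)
The plan is to reduce (\ref{exp229}) to the classical Laguerre functional equation on $\mathbb{C}^n$,
\[
\varphi_{k}^{n-1}\times\mu_{s}(Z)=\frac{k!(n-1)!}{(k+n-1)!}\,\varphi_{k}^{n-1}(Z)\,\varphi_{k}^{n-1}(s),
\]
which is a standard identity: since $\varphi_{k}^{n-1}$ is radial and $\varphi_{k}^{n-1}\times\mu_{s}$ lies in the same eigenspace of the standard twisted Laplacian as $\varphi_{k}^{n-1}$, the convolution must be a scalar multiple of $\varphi_{k}^{n-1}$, and the scalar is fixed by evaluation at $Z=0$ using $\varphi_{k}^{n-1}(0)=L_{k}^{n-1}(0)=(k+n-1)!/(k!(n-1)!)$.

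To transfer this identity to $\vartheta_{k,\lambda'}^{n-1}$, I would perform the componentwise dilation $u_{j}=\sqrt{|\lambda'_{j}|}\,w_{j}$ suggested by (\ref{exp203}). Because $\vartheta_{k,\lambda'}^{n-1}(\cdot)=\varphi_{k}^{n-1}(\sqrt{|\lambda'|}\,\cdot)$, the integrand transforms into $\varphi_{k}^{n-1}(\sqrt{|\lambda'|}\,z-u)$, while the $\lambda'$-twisted phase $\tfrac{1}{2}\sum_{j}\lambda'_{j}\,\mathrm{Im}(z_{j}\bar w_{j})$ becomes the standard symplectic phase $\tfrac{1}{2}\,\mathrm{Im}(\sqrt{|\lambda'|}z\cdot\bar u)$. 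The Jacobian of the dilation, once restricted to the sphere, accounts precisely for the factor $\prod_{j}1/\sqrt{|\lambda'_{j}|}$ appearing in the statement.

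The principal obstacle is that the sphere $|w|=r$ does not map to a sphere under the anisotropic dilation but to the ellipsoid $\sum_{j}|u_{j}|^{2}/|\lambda'_{j}|=r^{2}$, so the classical spherical identity cannot be quoted directly. To circumvent this I would use (\ref{exp214}) to expand $\vartheta_{k,\lambda'}^{n-1}$ into the diagonal special Hermite functions $\Psi_{\alpha\alpha}^{\lambda'}$, which are matrix coefficients $(\pi_{\lambda'}(z)\Psi_{\alpha}^{\lambda'},\Psi_{\alpha}^{\lambda'})$ of the irreducible representation $\pi_{\lambda'}$. Exploiting the $T^{n}$-invariance of $|w|=r$ under $w_{j}\mapsto e^{i\theta_{j}}w_{j}$, together with the fact that the $\Psi_{\alpha}^{\lambda'}$ are eigenfunctions of the generator of this action in each coordinate, a Fourier expansion in the toral angles forces all off-diagonal contributions $\Psi_{\alpha\beta}^{\lambda'}\times_{\lambda'}\mu_{r}$ with $\alpha\neq\beta$ to vanish, leaving only the $|\alpha|=k$ diagonal terms. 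These are then reassembled through the Laguerre addition formula $\sum_{|\alpha|=k}\prod_{j}L_{\alpha_{j}}^{0}(x_{j})=L_{k}^{n-1}(\sum_{j}x_{j})$, which recovers the factor $\vartheta_{k,\lambda'}^{n-1}(z)\,\vartheta_{k,\lambda'}^{n-1}(r)$ along with the combinatorial coefficient $k!(n-1)!/(k+n-1)!$ coming from the Laguerre normalization at the origin. The hardest single step will be the careful bookkeeping of the various $\sqrt{|\lambda'_{j}|}$ factors arising from (\ref{exp206}), (\ref{exp214}), and the dilation, since they enter with different powers and are easy to miscount.
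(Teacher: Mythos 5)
Your reduction to the classical Laguerre identity and, in particular, your observation that the componentwise dilation $w_j\mapsto\sqrt{|\lambda'_j|}\,w_j$ sends the sphere $\{|w|=r\}$ to an ellipsoid is exactly the right diagnosis: this is the precise point that the paper's own proof passes over, where after arriving at an integral over $\{|\sqrt[-1]{\lambda'}w|=r\}$ it simply asserts that ``a suitable change of variable in (\ref{exp208})'' produces the right-hand side (and indeed its final display still contains a free variable $w$ where a function of $r$ alone should stand). However, your proposed workaround has a genuine gap at the reassembly step. Granting that the $T^n$-invariance of $\mu_r$ reduces everything to the diagonal terms, what you obtain is $\Psi^{\lambda'}_{\alpha\alpha}\times_{\lambda'}\mu_r=c_\alpha(r)\,\Psi^{\lambda'}_{\alpha\alpha}$ with
\[
c_\alpha(r)\ \propto\ \int_{|w|=r}\prod_{j=1}^{n}L^{0}_{\alpha_j}\Bigl(\tfrac12\lambda'_j|w_j|^2\Bigr)\,e^{-\tfrac14\sum_j\lambda'_j|w_j|^2}\,d\mu_r(w),
\]
and this constant depends on the full multi-index $\alpha$, not only on $|\alpha|=k$, once the $\lambda'_j$ are distinct. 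Consequently $\sum_{|\alpha|=k}c_\alpha(r)\Psi^{\lambda'}_{\alpha\alpha}$ is not a scalar multiple of $\sum_{|\alpha|=k}\Psi^{\lambda'}_{\alpha\alpha}$, and the addition formula $\sum_{|\alpha|=k}\prod_jL^{0}_{\alpha_j}=L^{n-1}_{k}$ cannot be invoked to recover $\vartheta^{n-1}_{k,\lambda'}(z)\,\vartheta^{n-1}_{k,\lambda'}(r)$. A concrete check: for $n=2$, $k=1$, using $L^0_1(x)=1-x$ one finds $c_{(1,0)}(r)-c_{(0,1)}(r)=\tfrac12\int_{|w|=r}\bigl(\lambda'_2|w_2|^2-\lambda'_1|w_1|^2\bigr)e^{-\tfrac14(\lambda'_1|w_1|^2+\lambda'_2|w_2|^2)}d\mu_r(w)$, which is nonzero for $\lambda'_1\neq\lambda'_2$.

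You should also be aware that this gap is not one you can close by more careful bookkeeping, because the statement itself is problematic in the anisotropic case: $\vartheta^{n-1}_{k,\lambda'}(w)=L^{n-1}_k\bigl(\tfrac12\sum_j\lambda'_j|w_j|^2\bigr)e^{-\tfrac14\sum_j\lambda'_j|w_j|^2}$ is constant on $\{|w|=r\}$ only when all the $\lambda'_j$ coincide, so the quantity $\vartheta^{n-1}_{k,\lambda'}(r)$ on the right of (\ref{exp229}) is not well defined otherwise. In the isotropic case $\lambda'_1=\cdots=\lambda'_n$ both your argument and the paper's collapse to the same clean computation, since the dilation is then conformal and maps spheres to spheres; note, though, that even there the pushforward of the normalised (probability) measure $\mu_r$ carries no Jacobian, so the prefactor $\prod_j\lambda_j'^{-1/2}$ does not arise the way you describe (a direct $n=1$ computation gives $\vartheta^{0}_{k,\lambda'}\times_{\lambda'}\mu_r(z)=\vartheta^{0}_{k,\lambda'}(z)\vartheta^{0}_{k,\lambda'}(r)$ with no such factor). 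In short: your first half reproduces the paper's reduction, your identification of the obstruction is more honest than the paper's own treatment, but the proposed repair fails at the step where the diagonal eigenvalues must be independent of $\alpha$ within $|\alpha|=k$.
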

\begin{proof}
From (\cite{T1}, Theorem 2.1), it is known that the twisted spherical mean of
$\varphi_{k}^{n-1}$ with respect to the Heisenberg group can be written as
\begin{equation}\label{exp208}
\int_{|w|=r}\varphi_k^{n-1}(z-w)e^{\frac{i}{2}Im\left(z\cdot\bar{w}\right)}d\mu_{r}(w)
=\frac{k !(n-1) !}{(k+n-1) !}\varphi_k^{n-1}(z)\varphi_k^{n-1}(r).
\end{equation}	
Now, by a change of variable, we can rewrite
\begin{equation}\label{exp222}
\begin{aligned}
\vartheta_{k,{\lambda'}}^{n-1}\times_{\lambda'}\mu_r(z)
&=\int_{|w|=r}\vartheta_{k,{\lambda'}}^{n-1}(z-w)
e^{\frac{i}{2}\sum_{j=1}^{n}{{\lambda'}_j}\text{ Im}\left(z_j\cdot\,\bar{w}_j\right)}d\mu_r(w)\\
&=\int_{|w|=r}\varphi_{k}^{n-1}(\sqrt{{\lambda'}}z-\sqrt{{\lambda'}}w)
e^{\frac{i}{2}Im\left(\sqrt{{\lambda'}}z\cdot\overline{\sqrt{{\lambda'}}w}\right)}d\mu_r(w)\\
&=\int_{|\sqrt[-1]{{\lambda'}}w|=r}
\varphi_k^{n-1}(z'-w)e^{\frac{i}{2}Im\left(z'\cdot\bar{w}\right)}d\mu_{r}(w),
\end{aligned}
\end{equation}
where $z'=\sqrt{{\lambda'}}z.$ By a suitable change of variable in (\ref{exp208}),
we can write the above equation (\ref{exp222}) as
{\footnotesize
\begin{align*}
\int_{|\sqrt[-1]{{\lambda'}}w|=r}
\varphi_k^{n-1}(z'-w)e^{\frac{i}{2}Im\left(z'\cdot\bar{w}\right)}d\mu_{r}(w)=
\left(\prod_{j=1}^n\frac{1}{\sqrt{{\lambda'}_j}}\right)\frac{k !(n-1) !}
{(k+n-1) !}\varphi_k^{n-1}(z')\varphi_k^{n-1}(\sqrt{{\lambda'}}w).
\end{align*}}
Hence the identity (\ref{exp229}) is followed.
\end{proof}
Let $m=\left(m_{1}, \ldots, m_{n}\right) \in \mathbb{Z}^{n}.$ A function $f$ on
$\mathbb{C}^n$ is called $m$-homogeneous if it satisfies
$f(e^{i \theta} z)=f(e^{i \theta_1} z_{1}, \ldots, e^{i \theta_n} z_{n})
=e^{i m \cdot \theta} f(z),$ where $\theta=(\theta_{1}, \ldots, \theta_{n}).$
For a function $g$ on $\mathbb{C}^{n}$ define $m$-radialization $R_{m} g$ by
\begin{equation}\label{exp226}
R_{m} g(z)=(2 \pi)^{-n} \int_{[0,2 \pi)^{n}} g\left(e^{i \theta} z\right)
e^{-i m \cdot \theta} d \theta.
\end{equation}
Then $R_{m} f$ is $m$-homogeneous and we have
\begin{equation}\label{exp224}
f\left(z\right)=\sum_{m} R_{m} f(z) e^{i m \cdot \theta}.
\end{equation}
The series in the right-hand side of (\ref{exp224}) converges
in the topology of Schwartz class function $\mathcal{S}\left(\mathbb{C}^{n}\right),$
see \cite{T2}.

\smallskip

Since $\Psi_{\alpha\beta}^{{\lambda'}}$ is $(\beta-\alpha)$-homogeneous, we can see that
\[\left(f, \Psi_{\alpha\beta}^{{\lambda'}}\right)=\int_{\mathbb{C}^{n}} f(z)
\overline{\Psi_{\alpha \beta}^{\lambda'}}(z) d z\]
is nonzero only when $\beta=\alpha+m.$ Thus, if $f$ is $m$-homogeneous we can write
\begin{equation}\label{exp225}
f\times_{\lambda'} \vartheta_{k,{\lambda'}}^{n-1}=\left(\prod_{j=1}^n
\frac{|{{\lambda'}_j}|}{2\pi}\right) \sum_{|\beta|=k}\left(f, \Psi_{\beta-m\,
\beta}^{\lambda'}\right) \Psi_{\beta-m\, \beta}^{\lambda'} .
\end{equation}
In \cite{T2}, it has been proved that for the Heisenberg group
the special Hermite series of an m-homogeneous
function converges in the topology of $\mathcal{S}\left(\mathbb{C}^{n}\right).$
By imitating the prove in this case, we have the following result.

\begin{lemma}\label{lemma203}
lf $f$ is a Schwartz class function and $m$-homogeneous, then
the series (\ref{exp220}) of $f$ converges in the topology of
$\mathcal{S}\left(\mathbb{C}^{n}\right) .$
\end{lemma}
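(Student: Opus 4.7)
The plan is to imitate the argument of \cite{T2} for the Heisenberg case, replacing the standard special Hermite basis by the $\lambda'$-scaled basis $\{\Psi_{\alpha\beta}^{\lambda'}\}$, and to exploit the self-adjoint elliptic operator $H_{\lambda'}=-\Delta_z+\tfrac14\sum_j{\lambda'_j}^{2}|z_j|^2$, which has $\Psi_{\alpha\beta}^{\lambda'}$ as eigenfunctions with eigenvalue $\sum_j(\alpha_j+\beta_j+1)|\lambda'_j|$ growing linearly in $|\alpha|+|\beta|$. The first step is a coefficient decay lemma: since $H_{\lambda'}^{N}f\in\mathcal S(\mathbb C^n)\subset L^2(\mathbb C^n)$ for every $N$, self-adjointness and the Cauchy--Schwarz inequality yield
\[
\bigl|(f,\Psi_{\alpha\beta}^{\lambda'})\bigr|\leq C_N\,(1+|\alpha|+|\beta|)^{-N},\qquad N\in\mathbb N,
\]
for every Schwartz $f$.

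Next I would invoke the $m$-homogeneity. Because $\Psi_{\alpha\beta}^{\lambda'}$ is $(\beta-\alpha)$-homogeneous, the coefficient $(f,\Psi_{\alpha\beta}^{\lambda'})$ vanishes unless $\beta=\alpha+m$, so the double sum in (\ref{exp220}) collapses, in agreement with (\ref{exp225}), to the single sum
\[
f(z)=\left(\prod_{j=1}^n\frac{|\lambda'_j|}{2\pi}\right)\sum_{k=0}^{\infty}\ \sum_{|\beta|=k}c_\beta\,\Psi_{\beta-m\,\beta}^{\lambda'}(z),\qquad c_\beta=(f,\Psi_{\beta-m\,\beta}^{\lambda'}),
\]
with the convention that $\Psi_{\beta-m\,\beta}^{\lambda'}=0$ when $\beta-m\notin\mathbb Z^n_+$, and the $c_\beta$ still enjoying the super-polynomial decay just established. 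Convergence in $\mathcal S(\mathbb C^n)$ will follow from an absolute estimate of the form $\sum_k\sum_{|\beta|=k}|c_\beta|\,\rho_{N,M}(\Psi_{\beta-m\,\beta}^{\lambda'})<\infty$ for every Schwartz seminorm $\rho_{N,M}$.

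The main obstacle is thus a polynomial bound on $\rho_{N,M}(\Psi_{\beta-m\,\beta}^{\lambda'})$ in $|\beta|$. Here I would work with the $\lambda'$-scaled ladder operators
\[
A_j^{\lambda'}=\tfrac{\partial}{\partial z_j}+\tfrac{|\lambda'_j|}{4}\bar z_j,\qquad B_j^{\lambda'}=-\tfrac{\partial}{\partial\bar z_j}+\tfrac{|\lambda'_j|}{4} z_j,
\]
together with their formal adjoints, which shift the indices of $\Psi_{\alpha\beta}^{\lambda'}$ by one while contributing no more than a factor $C\sqrt{|\alpha|+|\beta|+1}$. Every monomial operator $z^\gamma\bar z^\delta\partial_z^\mu\partial_{\bar z}^\nu$ of total degree at most $M$ is a polynomial of bounded degree in these ladder operators with $\lambda'$-dependent coefficients, so applying it to $\Psi_{\beta-m\,\beta}^{\lambda'}$ produces a finite sum of $\Psi_{\alpha'\beta'}^{\lambda'}$ whose $L^2$-norm is polynomially bounded in $|\beta|$. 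Passing from $L^2$ to $L^\infty$ via a Sobolev embedding (handled again through the ladder calculus) gives
\[
\rho_{N,M}\bigl(\Psi_{\beta-m\,\beta}^{\lambda'}\bigr)\leq C(N,M)\,(1+|\beta|)^{K(N,M)},
\]
with a polynomial exponent $K(N,M)$. Combined with the super-polynomial decay of $c_\beta$ and the polynomial cardinality $\#\{|\beta|=k\}=O(k^{n-1})$, the partial sums are Cauchy in every Schwartz seminorm, which proves convergence in $\mathcal S(\mathbb C^n)$. The genuinely delicate part is tracking how the parameters $|\lambda'_j|$ enter the ladder identities; once that bookkeeping is in place, the argument parallels the Heisenberg case in \cite{T2} verbatim.
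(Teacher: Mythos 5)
Your proposal is correct and is essentially the proof the paper intends: the paper offers no written argument for Lemma \ref{lemma203}, stating only that it follows by imitating Thangavelu's Heisenberg-group result in \cite{T2}, and your sketch (super-polynomial coefficient decay obtained from the self-adjoint scaled special Hermite operator by self-adjointness and Cauchy--Schwarz, collapse of the double sum by $m$-homogeneity as in (\ref{exp225}), and polynomial growth of the Schwartz seminorms of $\Psi^{\lambda'}_{\beta-m\,\beta}$ via the scaled ladder operators) is a faithful adaptation of exactly that argument to the $\lambda'$-scaled basis. The only caution is notational: the paper already uses $H_{\lambda'}$ for the operator $-\Delta+\sum_{j}(\lambda'_j x_j)^2$ acting on $\mathbb{R}^n$, so you should choose a different symbol for your operator $-\Delta_z+\tfrac14\sum_{j}{\lambda'_j}^{2}|z_j|^2$ on $\mathbb{C}^n$.
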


\section{Spherical mean on the M\'{e}tivier Group} \label{sec4}
This section deals with the  injectivity of spherical mean $f\ast \mu$, where $\mu\in X_P(G),$
the space of compactly supported rotation invariant probability measure with no mass at the
centre of M\'{e}tivier group $G.$

\begin{proposition}\label{prop203}
Let  $1\leq p_i\leq2$ for $i=1,2.$ Let $f\in C(G)$ be such that $f(z,\cdot)\in L^{p_1}(\mathbb{R}^m)$ and
$f^\lambda\in L^ {p_2}(\mathbb{C}^n)$ for a.e. $\lambda\in\mathbb{R}^m_\ast.$ If $f$ satisfies
$f\ast \mu=0$ for some $\mu\in X_P(G),$ then $f=0.$
\end{proposition}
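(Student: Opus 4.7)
The plan is to reduce $f\ast\mu=0$ to a spectral identity on $\mathbb{C}^n$ via the tools of Section \ref{sec201}. First I take the inverse Fourier transform in $t$ (using $f(z,\cdot)\in L^{p_1}(\mathbb{R}^m)$ with $p_1\le 2$) to obtain $f^\lambda\times_\lambda\mu=0$ for a.e.\ $\lambda\in\mathbb{R}^m_\ast$. Lemma \ref{lemma201} and Remark \ref{rem201} convert this into
\begin{equation*}
g\times_{\lambda'}\mu=0,\qquad g(z):=f^\lambda(A_\lambda z),\quad \lambda'=(\mu_{\lambda,1},\ldots,\mu_{\lambda,n})\in\mathbb{R}^n_+,
\end{equation*}
an equation of $\lambda'$-twisted spherical type on the $3n$-dimensional group $\tilde G$.

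Using the rotation-invariance and compact support of $\mu$ away from the centre, I write $\mu=\int_0^\infty\mu_r\,d\nu(r)$ for a compactly supported probability measure $\nu$ on $(0,\infty)$. A direct change of variables shows that $R_m$ commutes with $\times_{\lambda'}\mu_r$, so $R_m g\times_{\lambda'}\mu=0$ for each $m\in\mathbb{Z}^n$; thus I may assume $g$ is $m$-homogeneous. Applying the reproducing formula (\ref{exp220}), using associativity of $\times_{\lambda'}$, and invoking Proposition \ref{prop202}, I rewrite the equation as
\begin{equation*}
\sum_{k=0}^{\infty} A_k(\lambda')\bigl(g\times_{\lambda'}\vartheta_{k,\lambda'}^{n-1}\bigr)(z) = 0,
\end{equation*}
with $A_k(\lambda'):=\kappa_{n,k,\lambda'}\int_0^\infty \vartheta_{k,\lambda'}^{n-1}(r)\,d\nu(r)$ and $\kappa_{n,k,\lambda'}\neq 0$. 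By (\ref{exp225}), each $g\times_{\lambda'}\vartheta_{k,\lambda'}^{n-1}$ is a combination of the $L^2$-orthonormal $\Psi_{\beta-m,\beta}^{\lambda'}$ with $|\beta|=k$, so the summands lie in pairwise orthogonal eigenspaces of $-\Delta_z+\frac{1}{4}\sum_j{\lambda'_j}^{2}|z_j|^2$. Linear independence then forces $A_k(\lambda')\bigl(g\times_{\lambda'}\vartheta_{k,\lambda'}^{n-1}\bigr)=0$ for every $k$.

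To close the argument I must show that, on a full-measure set of $\lambda$, $A_k(\lambda')\neq 0$ for every $k$. Since $\nu$ is compactly supported in $(0,\infty)$ and $\vartheta_{k,\lambda'}^{n-1}$ is a polynomial in the $|\lambda'_j|r^2$ times a Gaussian, $\lambda'\mapsto A_k(\lambda')$ is real analytic on $(0,\infty)^n$, and it is not identically zero (along the ray $\lambda'=s(1,\ldots,1)$ as $s\to\infty$ the leading Laguerre term dominates and produces the nonzero moment $\int r^{2k}\,d\nu$). Hence for each $k$ the zero set of $A_k$ has $n$-dimensional Lebesgue measure zero, and the countable union over $k$ is still null. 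On the complement $g\times_{\lambda'}\vartheta_{k,\lambda'}^{n-1}=0$ for all $k$, so (\ref{exp220}) yields $g\equiv 0$. Therefore $f^\lambda=0$ for a.e.\ $\lambda$, and Fourier inversion combined with continuity of $f$ forces $f\equiv 0$.

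The principal obstacle is the nonvanishing of $A_k(\lambda')$ uniformly in $k$: without it, a single bad $\lambda'$ could permit some $A_k$ to vanish precisely where the corresponding projection of $g$ is nonzero, obstructing the reconstruction of $g$. A secondary technical point is justifying the orthogonality extraction when $g\in L^{p_2}(\mathbb{C}^n)$ with $p_2<2$: once $g$ is $m$-homogeneous, pairing each summand against the Schwartz eigenfunction $\Psi_{\beta-m,\beta}^{\lambda'}$ via H\"older ($\|g\|_{p_2}\|\Psi\|_{p_2'}<\infty$) suffices to extract the individual Hermite--Laguerre coefficients and reduce the equation to a coefficientwise statement.
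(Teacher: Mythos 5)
Your proof follows essentially the same route as the paper's: partial Fourier transform in $t$, disintegration of $\mu$ into the surface measures $\mu_r$, passage to the $\lambda'$-twisted mean on $\tilde{G}$ via Lemma \ref{lemma201} and Remark \ref{rem201}, the spectral decomposition (\ref{exp220}) combined with Proposition \ref{prop202}, and the observation that the coefficients $A_k(\lambda')=\kappa\int\vartheta_{k,\lambda'}^{n-1}(r)\,d\nu(r)$ vanish only on a $\lambda$-null set; your handling of $p_2<2$ by $m$-radialization and duality against the Schwartz eigenfunctions $\Psi_{\beta-m\,\beta}^{\lambda'}$ is a more explicit substitute for the paper's brief appeal to an approximation identity.

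One concrete slip: your justification that $A_k\not\equiv0$ fails as written. Along $\lambda'=s(1,\ldots,1)$ with $s\to\infty$, the Gaussian factor $e^{-\frac{1}{4}sr^2}$ dominates the polynomial growth of $L_k^{n-1}\left(\frac{1}{2}sr^2\right)$, so $A_k\to0$ rather than tending to a nonzero moment. The claim is still true and the fix is immediate: let $s\to0^+$ instead, where $\vartheta_{k,\lambda'}^{n-1}(r)\to L_k^{n-1}(0)=\binom{k+n-1}{k}\neq0$ uniformly on the compact support of $\nu$ (which avoids $0$ since $\mu$ has no mass at the centre), so $A_k$ is nonzero near $s=0$ and the real-analyticity argument goes through; this is in effect the paper's ``countably many $\lambda$ on each ray'' observation.
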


\begin{proof}
For $\lambda\in\mathbb{R}^m_\ast,$ let $f^\lambda$ and $\mu^\lambda$
be the partial Fourier transform of $f$ and $\mu$ in $t$ variable, respectively.
Then applying $\lambda$-twisted convolution, we get $f^\lambda\times_\lambda\mu^\lambda=0.$
Since $\mu\in X_P(G),$ by Choquet's Theorem we get
\begin{equation}\label{exp209}
f^\lambda\times_\lambda\mu^\lambda=\int_Ef^\lambda
\times_\lambda \mu_r\,dM,
\end{equation}
where $E=\{\mu_r\},\,\mu_r$ is the normalised surface measure on the sphere of radius $r$
centred at the origin in $\mathbb{C}^n,$ and $M$ is the measure on $E.$ For more details,
refer to \cite{SK}. From Lemma \ref{lemma201}, we can rewrite (\ref{exp209}) using the modified
$\lambda$-twisted spherical mean as
\begin{equation}\label{exp210}
f^\lambda\times_\lambda\mu^\lambda=\int_E(f^\lambda)_\lambda ~\tilde{ \times}_\lambda~ \mu_r\,dM,
\end{equation}
where $(f^\lambda)_\lambda(x)=f^\lambda(A_\lambda x)$ defined as in Lemma \ref{lemma201}.
For a fixed $\lambda,$ and considering Remark \ref{rem201}, we can write
\begin{equation}\label{exp227}
(f^\lambda)_\lambda ~\tilde{ \times}_\lambda~\mu_r=(f^\lambda)_\lambda\times_{\lambda'}\mu_r
\end{equation}
for some $\lambda'\in\mathbb{R}_+^n.$ In the right-hand side,
$\lambda'$-twisted spherical mean is with respect to $\tilde{G}$ as
defined by (\ref{exp205}). By an appropiate approximation identity, we may assume that
$(f^\lambda)_\lambda\in L^2(\mathbb{C}^n).$  Then applying the spectral decomposition
(\ref{exp220}), $(f^\lambda)_\lambda$ can be expressed in terms of $\vartheta_{k,\lambda'}^{n-1}$
as
\begin{equation}\label{exp211}
(f^\lambda)_\lambda=\left(\prod_{j=1}^n\frac{\lambda'_j}{2\pi}\right)
\sum_{k=0}^\infty(f^\lambda)_\lambda\times_{\lambda'}\vartheta_{k,\lambda'}^{n-1},
\end{equation}
where the series converges in $L^2(\mathbb C^n).$ Now, it is enough to prove
that each spectral projection $(f^\lambda)_\lambda\times_{\lambda'}~\vartheta_{k,\lambda'}^{n-1}=0.$
From (\ref{exp210}) and (\ref{exp211}) we have
\begin{equation*}
\sum_{k=0}^\infty\int_E(f^\lambda)_\lambda\times_{\lambda'}\vartheta_{k,\lambda'}^{n-1}
\times_{\lambda'}\mu_r\,dM=0.
\end{equation*}
In view of Proposition (\ref{prop202}), we get
\[\sum_{k=0}^\infty\mu^\lambda\left(\vartheta_{k,\lambda'}^{n-1}\right)
(f^\lambda)_\lambda\times_{\lambda'}\vartheta_{k,\lambda'}^{n-1}=0,\]
where
\[\mu^\lambda\left(\vartheta_{k,\lambda'}^{n-1}\right)=\int_{\mathbb{C}^n}
\vartheta_{k,\lambda'}^{n-1}\mu^\lambda=\int_E\vartheta_{k,\lambda'}^{n-1}dM.\]
Since for each $k\in\mathbb{Z}_+,$ $\mu^\lambda\left(\vartheta_{k,\lambda'}^{n-1}\right)$
vanishes only for countable many values of $\lambda.$ Hence
$(f^\lambda)_\lambda\times_{\lambda'}\vartheta_{k,\lambda'}^{n-1}=0$ implies
$(f^\lambda)_\lambda ~\tilde{ \times}_\lambda~\mu_r=0$ for a.e.
$\lambda$ and each $k.$ Thus, $(f^\lambda)_\lambda=0$ for a.e. $\lambda,$ which concludes $f=0.$
\end{proof}
In the following result we relax the integrability condition of $f^\lambda$ in the $z$
variable with tempered growth with help of some approximation lemmas from Section \ref{sec201}.

\begin{theorem}
Let $f$ be a continuous function on $G$ with $ f(z, \cdot) \in L^{p}(\mathbb{R}^m),$ $ 1 \leq p \leq 2$
and  $f^{\lambda}$ has tempered growth in $\mathbb{C}^{n}$ for a.e. $\lambda\in\mathbb{R}^m_\ast.$
If $f$ satisfies $f\ast \mu=0$ for some $\mu\in X_P(G),$ then $f=0.$
\end{theorem}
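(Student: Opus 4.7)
The strategy is to mimic the reduction of Proposition \ref{prop203} up to the vanishing of the individual spectral projections, and then to replace the $L^p$-based conclusion by a distribution-theoretic argument built on Lemma \ref{lemma203}.

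First, I would carry out verbatim the opening steps of the proof of Proposition \ref{prop203}: take the partial Fourier transform in $t$, use Lemma \ref{lemma201} and Remark \ref{rem201} to express the identity as a $\lambda'$-twisted mean on $\tilde{G}$, and invoke Choquet's theorem to get $(f^\lambda)_\lambda\times_{\lambda'}\mu^\lambda=\int_E (f^\lambda)_\lambda\times_{\lambda'}\mu_r\,dM(r)=0$. Although $(f^\lambda)_\lambda$ need not be square-integrable, twisted convolution against the Schwartz function $\vartheta_{k,\lambda'}^{n-1}$ is legitimate and commutes with the $dM$-integration by dominated convergence. Combining this with the functional relation of Proposition \ref{prop202} gives $\mu^\lambda(\vartheta_{k,\lambda'}^{n-1})\,(f^\lambda)_\lambda\times_{\lambda'}\vartheta_{k,\lambda'}^{n-1}=0$, and since for each fixed $k$ the scalar $\mu^\lambda(\vartheta_{k,\lambda'}^{n-1})$ vanishes only on a countable $\lambda$-set, I conclude
\[
(f^\lambda)_\lambda\times_{\lambda'}\vartheta_{k,\lambda'}^{n-1}=0 \qquad\text{for all }k\in\mathbb{Z}_+\text{ and a.e. }\lambda\in\mathbb{R}^m_\ast.
\]

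The second stage converts this spectral vanishing into $(f^\lambda)_\lambda\equiv 0$; this is the genuinely new content of the theorem. Viewing $(f^\lambda)_\lambda$ as a tempered distribution, I aim to show it annihilates every Schwartz test function $g$. Decomposing $g=\sum_{m\in\mathbb{Z}^n} R_m g$ as in (\ref{exp224}) and noting that the pairing with $(f^\lambda)_\lambda$ picks out only the matching homogeneity, it suffices to handle a single $m$-homogeneous Schwartz $g$. Lemma \ref{lemma203} expresses such a $g$ as the $\mathcal{S}$-convergent Laguerre expansion $g=\bigl(\prod_j\frac{\lambda'_j}{2\pi}\bigr)\sum_k g\times_{\lambda'}\vartheta_{k,\lambda'}^{n-1}$. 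Pairing termwise against $(f^\lambda)_\lambda$, which is justified by Schwartz convergence against a polynomial-growth function, and moving the radial Schwartz kernel $\vartheta_{k,\lambda'}^{n-1}$ across the pairing via the standard adjoint identity, gives
\[
\bigl(g,(f^\lambda)_\lambda\bigr)=\Bigl(\prod_j\frac{\lambda'_j}{2\pi}\Bigr)\sum_k \bigl(g,(f^\lambda)_\lambda\times_{\lambda'}\vartheta_{k,\lambda'}^{n-1}\bigr)=0.
\]
Hence $(f^\lambda)_\lambda=0$ as a tempered distribution, so $f^\lambda=0$ for a.e.\ $\lambda$, and continuity of $f$ together with Fourier uniqueness in $t$ forces $f\equiv 0$.

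The principal obstacle lies in this last paragraph: justifying at the tempered level (i) the termwise pairing of the Schwartz-convergent expansion with $(f^\lambda)_\lambda$, and (ii) the adjoint identity $(g\times_{\lambda'}\vartheta,F)=(g,F\times_{\lambda'}\vartheta)$ when $F$ is only tempered and $\vartheta=\vartheta_{k,\lambda'}^{n-1}$ is real and rotation-invariant. Point (i) is controlled because $\mathcal{S}$-convergence dominates every polynomial-weighted seminorm, hence dominates integration against a tempered function; point (ii) is a Fubini manipulation of the kernel (\ref{exp205}) that is routine for $F\in L^2$ and extends to tempered $F$ either by a Schwartz cut-off approximation, exploiting that $F\times_{\lambda'}\vartheta$ is smooth with uniform polynomial control, or by direct computation using the evenness and rotational symmetry of $\vartheta_{k,\lambda'}^{n-1}$.
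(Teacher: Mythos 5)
Your argument is correct and follows the paper's strategy in all essentials: reduce via Lemma \ref{lemma201} and Remark \ref{rem201} to the group $\tilde{G}$, kill each Laguerre spectral projection, and recover $f^\lambda=0$ by pairing against Schwartz functions through the $m$-radialization and the $\mathcal S$-convergence of Lemma \ref{lemma203}. The one place you genuinely deviate is in how the projections are shown to vanish. The paper never convolves the tempered function with $\mu^\lambda$ directly: it observes from (\ref{exp225}) that $\vartheta_{k,\lambda'}^{n-1}\times_{\lambda'}R_mf^{\lambda}$ is a finite linear combination of the $\Psi_{\beta-m\,\beta}^{\lambda'}$, hence lies in $L^2(\mathbb{C}^n)$, still satisfies the convolution equation with $\mu^\lambda$, and therefore vanishes by Proposition \ref{prop203} applied as a black box. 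You instead re-run the Choquet/Proposition \ref{prop202} computation of Proposition \ref{prop203} directly on the tempered function $(f^\lambda)_\lambda$, which obliges you to justify the twisted convolution of a polynomial-growth function against $\mu_r$ and $\vartheta_{k,\lambda'}^{n-1}$, the associativity of these convolutions, and the interchange with the $dM$-integration; these are routine absolute-convergence/Fubini checks (the kernel is Schwartz and $\mu_r$ is compactly supported), so your route goes through. The paper's choice is slightly more economical in that it recycles the already-proved $L^2$ statement and only ever manipulates square-integrable objects; yours has the small compensating advantage that only the test function $g$, not $f^\lambda$, needs to be radialized. One slip to correct: the adjoint identity for the twisted pairing is $(g\times_{\lambda'}\vartheta,F)=(g,\vartheta\times_{\lambda'}F)$ (the factors change sides), and it is only because $\vartheta_{k,\lambda'}^{n-1}$ is polyradial, hence commutes under $\times_{\lambda'}$ as noted before (\ref{exp220}), that the right-hand side may be rewritten as $(g,F\times_{\lambda'}\vartheta)$.
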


\begin{proof}
Since $f$ is integrable in the second variable, applying $\lambda$-twisted
convolution on $f\ast\mu=0,$ we get $f^\lambda\times_\lambda\mu^\lambda=0$ for
a.e. $\lambda .$ Hence we claim $f^{\lambda}=0$ for almost all $\lambda .$
But tempered growth of $f^{\lambda}$ reduces to show that
\[\int_{\mathbb C^n} f^{\lambda}(z) g(z) d z=0\]
for every $g$ in $\mathcal{S}\left(\mathbb{C}^{n}\right).$ Since $g$ admits an
$m$-radialization expansion, we can replace both $g$ and $f^{\lambda}$ with
their $m$-radialization. Therefore, it is enough to consider
\begin{equation}\label{exp230}
\int_{\mathbb C^n} R_{m} f^{\lambda}(z) g(z) d z=0
\end{equation}
for all $m$-homogeneous $g\in\mathcal{S}\left(\mathbb{C}^{n}\right).$
If we fix $\lambda,$ then there exists $\lambda'\in\mathbb{R}_+^n$ as in
(\ref{exp227}), and by Lemma \ref{lemma203} we can reciprocate $g$ with
$g \times_{\lambda'} \vartheta_{k,\lambda'}^{n-1}$ in (\ref{exp230}).
Hence it is enough to examine
\[\int_{\mathbb C^n} R_{m} f^{\lambda}(z) g \times_{\lambda'} \vartheta_{k,\lambda'}^{n-1}(z) d z=0\]
for every $k,$ which is equivalent to
\[\int \vartheta_{k,\lambda'}^{n-1} \times_{\lambda'} R_{m} f^{\lambda}(z) g(z) d z=0.\]
From (\ref{exp225}) it is clear that $\vartheta_{k,\lambda'}^{n-1} \times_{\lambda'}
R_{m} f^{\lambda}\in L^2(\mathbb{C}^n).$ And we also have
$\vartheta_{k,\lambda'}^{n-1} \times_{\lambda'} R_{m} f^{\lambda} \times_{\lambda} \mu^{\lambda}=0.$
Using Proposition \ref{prop203} we conclude that $\vartheta_{k,\lambda'}^{n-1} \times_{\lambda'} R_{m}
f^{\lambda}=0.$ This proves the theorem.
\end{proof}
In the previous results we have considered $\mu\in X_P(G).$ In the
following result, we replace $\mu$ by $\mu_r,$ the normalised surface measure
on $\{z\in\mathbb{C}^n:|z|= r\},$ which demands more decay in $z$ variable.
\begin{theorem}
Let $f$ be a continuous function on $G$ such that $ f(z, \cdot) \in L^{p}(\mathbb{R}^m),$ $ 1 \leq p \leq 2$
and  $f^{\lambda}(z)e^{\frac{1}{4}\left|J_\lambda z_{\lambda}\right|^{2}}$ is in
$L^{p'}(\mathbb{C}^{n}),1\leq p'\leq\infty$ for a.e. $\lambda\in\mathbb{R}^m_\ast.$ If $f$ satisfies $f * \mu_r=0$
for some $r>0,$ then $f=0$.
\end{theorem}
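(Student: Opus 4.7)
The plan is to adapt the strategy of Proposition~\ref{prop203} to the single-radius measure $\mu_r$, exploiting Proposition~\ref{prop202}: the Laguerre functions $\vartheta_{k,\lambda'}^{n-1}$ are eigenfunctions of $\times_{\lambda'}\mu_r$, so the task reduces to showing the eigenvalues do not all vanish. First I would take the partial Fourier transform in $t$ of $f\ast\mu_r=0$ to obtain $f^\lambda\times_\lambda\mu_r=0$ for a.e.\ $\lambda\in\mathbb{R}^m_\ast$, and then apply Lemma~\ref{lemma201} together with Remark~\ref{rem201} to rewrite this on the auxiliary group $\tilde G$ as $(f^\lambda)_\lambda\times_{\lambda'}\mu_r=0$, where $\lambda'\in\mathbb{R}^n_+$ records the imaginary parts of the eigenvalues of $V_\lambda$. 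A routine torus change of variables $w\mapsto e^{i\theta}w$ shows that $\times_{\lambda'}\mu_r$ preserves $m$-homogeneity and therefore commutes with the projection $R_m$ of (\ref{exp226}), so it suffices to prove $R_m(f^\lambda)_\lambda=0$ for every $m\in\mathbb{Z}^n$ and a.e.\ $\lambda$. The Gaussian weight in the hypothesis $f^{\lambda}(z)e^{\frac{1}{4}|J_\lambda z_\lambda|^{2}}\in L^{p'}$ matches the decay of every $\Psi_{\alpha\beta}^{\lambda'}$, which implies $(f^\lambda)_\lambda\in L^{2}(\mathbb{C}^n)$ for a.e.\ $\lambda$, and the special Hermite expansion (\ref{exp220}) applies to each $R_m(f^\lambda)_\lambda$.

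Next I would convolve the vanishing relation $R_m(f^\lambda)_\lambda\times_{\lambda'}\mu_r=0$ on the right with $\vartheta_{k,\lambda'}^{n-1}$, use commutativity/associativity of the $\lambda'$-twisted convolution, and apply Proposition~\ref{prop202} to obtain, for every $k\in\mathbb{Z}_+$,
\[
\Big(\prod_{j=1}^n\tfrac{1}{\sqrt{\lambda'_j}}\Big)\frac{k!(n-1)!}{(k+n-1)!}\,\vartheta_{k,\lambda'}^{n-1}(r)\cdot\bigl(R_m(f^\lambda)_\lambda\times_{\lambda'}\vartheta_{k,\lambda'}^{n-1}\bigr)=0.
\]
Whenever the scalar coefficient is nonzero, the spectral projection $R_m(f^\lambda)_\lambda\times_{\lambda'}\vartheta_{k,\lambda'}^{n-1}$ must vanish; then summing over $k$ via (\ref{exp220}) forces $R_m(f^\lambda)_\lambda=0$, summing over $m$ via (\ref{exp224}) gives $(f^\lambda)_\lambda=0$, and Fourier inversion in $t$ delivers $f=0$.

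The hard part is the single-radius non-vanishing step: showing that for a.e.\ $\lambda\in\mathbb{R}^m_\ast$ the Laguerre values $\vartheta_{k,\lambda'}^{n-1}(r)$ are simultaneously nonzero for every $k\in\mathbb{Z}_+$. In the $X_P(G)$ setting of Proposition~\ref{prop203} one averaged this factor over a full family of radii via Choquet's theorem; here only one $r$ is available, so a different argument is required. For each fixed $k$ and $r$, the map $\lambda\mapsto\vartheta_{k,\lambda'}^{n-1}(r)$ is a nontrivial real-analytic function of $\lambda$: the eigenvalues $\mu_{\lambda,j}$ depend real-analytically on $\lambda$ off a lower-dimensional resonance locus, and the Laguerre--Gaussian factor is entire in these parameters, with leading asymptotics as $|\lambda|\to 0$ that prevent identical vanishing. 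Consequently its zero set in $\mathbb{R}^m_\ast$ has Lebesgue measure zero, and the countable union over $k\in\mathbb{Z}_+$ remains null. On the complement of this exceptional set the previous paragraph produces $f=0$, completing the proof.
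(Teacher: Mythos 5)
Your proof diverges from the paper's at exactly the step you flag as ``the hard part,'' and your alternative is legitimate but genuinely different in kind. The paper works with a \emph{fixed} $\lambda$: invoking the disjointness of the zero sets of the Laguerre polynomials $L_k^{n-1}$, at most one index $k=l$ can have $\vartheta_{l,\lambda'}^{n-1}(r)=0$, so $(f^\lambda)_\lambda$ collapses to the single spectral projection $C\,(f^\lambda)_\lambda\times_{\lambda'}\vartheta_{l,\lambda'}^{n-1}$; by (\ref{exp225}) that projection is a finite sum of the functions $\Psi_{\beta-m\,\beta}^{\lambda'}$, i.e.\ a polynomial times $e^{-\frac{1}{4}|\lambda'z|^2}$, and the hypothesis $f^{\lambda}(z)e^{\frac{1}{4}|J_\lambda z_{\lambda}|^{2}}\in L^{p'}$ is invoked precisely to force that polynomial to vanish. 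You instead run an almost-every-$\lambda$ argument: for each $k$ the bad set $\{\lambda:\vartheta_{k,\lambda'}^{n-1}(r)=0\}$ is Lebesgue-null, the countable union over $k$ is still null, and off this union every projection dies, so no surviving term needs to be killed by decay. This is the same mechanism the paper itself uses in Proposition~\ref{prop203} (where $\mu^\lambda(\vartheta_{k,\lambda'}^{n-1})$ vanishes only on a null set of $\lambda$'s), transplanted to the single-radius setting; it exploits the $m$-dimensional centre and, notably, renders the Gaussian-decay hypothesis essentially superfluous, so if it goes through it proves a stronger statement. The paper's route is the one that survives when $\lambda$ cannot be varied (the classical one-radius problem on $\mathbb{C}^n$ for a fixed twist), which is presumably why the theorem is stated with the decay hypothesis.

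Two points in your write-up need repair, though neither is fatal. First, the null-set claim should not be routed through real-analyticity of $\lambda\mapsto\mu_{\lambda,j}$: the ordered eigenvalues need not be analytic across crossings, and ``nontrivial off a resonance locus'' does not by itself control every connected component. The clean argument is homogeneity: $V_{t\lambda}=tV_\lambda$ gives $\mu_{t\lambda,j}=t\,\mu_{\lambda,j}$ for $t>0$, so along each ray the argument of $L_k^{n-1}$ increases linearly from $0$ and meets the finite zero set of $L_k^{n-1}$ for only finitely many $t$; polar coordinates and Fubini then give measure zero without any analyticity. Second, your assertion that the weighted hypothesis implies $(f^\lambda)_\lambda\in L^{2}(\mathbb{C}^n)$ is correct for $p'\geq 2$ (H\"older against the Gaussian) but fails for $p'<2$; for small $p'$ you would need to fall back on the tempered-pairing device of the preceding theorem or interpolate with additional information. (The paper's own proof also tacitly assumes that the expansion (\ref{exp220}) converges to $(f^\lambda)_\lambda$, so this last gap is shared rather than introduced by you.)
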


\begin{proof}
Applying $\lambda$-twisted convolution on
$f * \mu_r=0,$ we get $f^\lambda\times_\lambda\mu_r=0$ for
a.e. $\lambda.$ For a fix $\lambda,$ then there exists $\lambda'\in\mathbb{R}_+^n$
as in (\ref{exp227}), and using Lemma \ref{lemma201} it follows that
\begin{equation}\label{exp228}
(f^\lambda)_\lambda\times_{\lambda'}\mu_r=0.
\end{equation}
Further, taking the $\lambda'$-twisted convolution of equation (\ref{exp228}) and
$\vartheta_{k,\lambda'}^{n-1},$ and using Proposition \ref{prop202} we get
$\vartheta_{k,\lambda'}^{n-1}(r)(f^\lambda)_\lambda\times_{\lambda'}
\vartheta_{k,\lambda'}^{n-1}=0$ for all $k\in\mathbb Z_+.$
Since the zero sets of Laguerre polynomials are disjoint,
$\vartheta_{k,\lambda'}^{n-1}(r)\neq0$ for all $k$ except one, say $k=l.$
That is, $(f^\lambda)_\lambda\times_{\lambda'}\vartheta_{k,\lambda'}^{n-1}=0$
for all $k\neq l.$ Hence we get $(f^\lambda)_\lambda=C(f^\lambda)_\lambda\times_{\lambda'}
\vartheta_{l,\lambda'}^{n-1}$ for some nonzero constant $C.$ Since
$R_m(f^\lambda)_\lambda$ is $m$-homogeneous, in view of (\ref{exp225}) we get
\begin{equation*}
R_m(f^\lambda)_\lambda=CR_m(f^\lambda)_\lambda\times_\lambda' \vartheta_{l,\lambda'}^{n-1}=C\left(\prod_{j=1}^n
\frac{|{\lambda'_j}|}{2\pi}\right) \sum_{|\beta|=l}\left((f^\lambda)_\lambda, \Psi_{\beta-m\, \beta}^{\lambda'}\right)
\Psi_{\beta-m\, \beta}^{\lambda'}.
\end{equation*}
Replacing $R_m(f^\lambda)_\lambda$ with $R_m((f^\lambda)_\lambda
\,e^{-\frac{1}{4}\left|{\lambda'}z\right|^{2}})$ we have
\[R_m((f^\lambda)_\lambda
\,e^{\frac{1}{4}\left|{\lambda'}z\right|^{2}})=C\left(\prod_{j=1}^n
\frac{|{\lambda'_j}|}{2\pi}\right) \sum_{|\beta|=l}\left((f^\lambda)_\lambda, \Psi_{\beta-m\, \beta}^{\lambda'}\right)
\Psi_{\beta-m\, \beta}^{\lambda'}\,e^{\frac{1}{4}\left|{\lambda'}z\right|^{2}}.\]
By the hypothesis $f^{\lambda}(z)e^{\frac{1}{4}\left|J_\lambda z_{\lambda}\right|^{2}}\in
L^{p'}(\mathbb{C}^{n}),$ it follows that left-hand side is in $L^{p'}(\mathbb{C}^{n}),$
which makes $f=0$ since the right-hand side is a polynomial.
\end{proof}

We now prove a version of the two radii theorem for the class of tempered
continuous functions on the  M\'{e}tivier group, which are periodic in the
centre variable.
\begin{theorem}
Let $f$ be a tempered continuous function in $z$ and $2\pi$-periodic in the centre variable of $G.$
If $f$ satisfies $f \ast \mu_{r_i}=0,~i=1,2,$ then $f=0$ as long as\\
\emph{(i)} $\frac{r_1^2}{r_2^2}$ is not a quotient of zeros of Laguerre polynomials
$L_{k}^{n-1}$ for any $k$.\\
\emph{(ii)} $\frac{r_1}{r_2}$ is not a quotient of zeros of Bessel functions $J_{n-1}$.
\end{theorem}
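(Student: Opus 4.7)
The plan is to strip the centre variable by a Fourier series expansion and reduce the problem, coefficient by coefficient, to a family of vanishing statements for twisted spherical means on $\mathbb{C}^n$ of the type handled in Proposition \ref{prop202}. Since $f$ is continuous and $2\pi$-periodic in $t$, I would first expand
\begin{equation*}
f(z, t) = \sum_{\lambda \in \mathbb{Z}^m} f^\lambda(z)\, e^{-i\lambda\cdot t},
\end{equation*}
each coefficient $f^\lambda$ being tempered continuous in $z$. From $f\ast\mu_{r_i}=0$ and the orthogonality of the $t$-characters, I get the family of identities
\begin{equation*}
f^\lambda \times_\lambda \mu_{r_i}=0,\qquad \lambda\in\mathbb{Z}^m,\ i=1,2.
\end{equation*}

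For $\lambda=0$ this is just the ordinary spherical mean relation $f^0\ast\mu_{r_i}=0$ on $\mathbb{R}^{2n}$. Since $\mu_{r_i}$ is compactly supported and $f^0$ is a tempered distribution, the distributional Fourier transform yields $\widehat{f^0}\cdot\widehat{\mu_{r_i}}=0$, and because the Fourier transform of the unit sphere measure on $\mathbb{R}^{2n}$ is a constant multiple of $J_{n-1}(|\xi|)/|\xi|^{n-1}$, hypothesis (ii) forces the common zero set of $\widehat{\mu_{r_1}}$ and $\widehat{\mu_{r_2}}$ to be empty. Hence $\widehat{f^0}=0$, so $f^0=0$. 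For $\lambda\neq 0$ I would invoke Lemma \ref{lemma201} together with Remark \ref{rem201} to rewrite the relation as
\begin{equation*}
(f^\lambda)_\lambda \times_{\lambda'}\mu_{r_i}=0,
\end{equation*}
for the associated $\lambda'\in\mathbb{R}_+^n$ built from the eigenvalues of $V_\lambda$, and then take the $\lambda'$-twisted convolution with $\vartheta_{k,\lambda'}^{n-1}$. Proposition \ref{prop202} turns this into
\begin{equation*}
\vartheta_{k,\lambda'}^{n-1}(r_i)\,(f^\lambda)_\lambda\times_{\lambda'}\vartheta_{k,\lambda'}^{n-1}=0,\qquad k\geq 0,\ i=1,2.
\end{equation*}
Since the radial profile of $\vartheta_{k,\lambda'}^{n-1}$ is a Laguerre function of type $(n-1)$ evaluated at a positive rescaling of $r^2$, hypothesis (i) prevents $\vartheta_{k,\lambda'}^{n-1}(r_1)$ and $\vartheta_{k,\lambda'}^{n-1}(r_2)$ from vanishing simultaneously for any single $k$, so $(f^\lambda)_\lambda\times_{\lambda'}\vartheta_{k,\lambda'}^{n-1}=0$ for every $k\geq 0$.

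At this stage I would copy the tempered-growth reduction already used in the preceding theorem: it suffices to verify $\int (f^\lambda)_\lambda(z)\, g(z)\, dz=0$ for every Schwartz $g$; replacing both factors by their $m$-radializations (\ref{exp226}) and using Lemma \ref{lemma203} to replace $g$ by $g\times_{\lambda'}\vartheta_{k,\lambda'}^{n-1}$ reduces the identity to $\int \vartheta_{k,\lambda'}^{n-1}\times_{\lambda'} R_m(f^\lambda)_\lambda(z)\, g(z)\, dz=0$, which is now immediate. Combining $(f^\lambda)_\lambda=0$ for every $\lambda\neq 0$ with $f^0=0$ and feeding everything back into the Fourier series gives $f\equiv 0$. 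The main obstacle I foresee is the $\lambda=0$ stratum, where the whole twisted-mean machinery of Sections \ref{sec202}--\ref{sec201} is inactive and one must import the classical Euclidean two-radii theorem on $\mathbb{R}^{2n}$; a secondary subtlety is matching hypothesis (i), stated in terms of zeros of $L_k^{n-1}$, with the actual vanishing locus of $\vartheta_{k,\lambda'}^{n-1}$ at the radii $r_1,r_2$ uniformly over the $\lambda'$ associated to each $\lambda\in\mathbb{Z}^m\setminus\{0\}$.
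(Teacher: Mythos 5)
Your proposal follows essentially the same route as the paper: expand $f$ in a Fourier series in the periodic centre variable, reduce each nonzero coefficient to twisted spherical mean identities via Lemma \ref{lemma201}, Remark \ref{rem201} and Proposition \ref{prop202} (with hypothesis (i) ruling out simultaneous vanishing of the two Laguerre factors), and dispose of the zero coefficient by the Euclidean two-radii theorem under hypothesis (ii). The only divergence is cosmetic: where the paper asserts $f^l\in L^2(\mathbb{C}^n)$ and concludes directly from the vanishing of all spectral projections, you finish via the tempered-duality and $m$-radialization argument of the preceding theorem, which is if anything the more careful way to close that step.
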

\begin{proof}
For $l\in\mathbb{Z}^m,$ define the $l$th Fourier coefficient of $f$ by
\[f^{l}(z)=\int_{[0,2 \pi]^m} f(z, t) e^{i l\cdot t} d t.\]
It follows by Lemma \ref{lemma203} that $f^l\in L^2(\mathbb{C}^n).$
Further, taking the $l$-twisted spherical mean of $f \ast \mu_{r_i}=0$
and uniqueness of the Fourier series, we get $f^l\times_l\mu_{r_i}=0$
for $i=1,2.$ Let us fix $l\neq0,$ then using Lemma \ref{lemma201} and Equation (\ref{exp227}) we can write
\[(f^l)_l\times_{\lambda'}\mu_{r_i}=0~\text{ for some } \lambda'\in\mathbb{R}_+^n\text{ and }i=1,2,\]
where $(f^l)_l=f^l(A_lx)$ as defined in Lemma \ref{lemma201}. Then using Proposition \ref{prop202} we get
$\vartheta_{k,\lambda'}^{n-1}(r_i)(f^l)_l\times_{\lambda'}
\vartheta_{k,\lambda'}^{n-1}(z)=0$ for $i=1,2.$ Since for each $k,$ either
$\vartheta_{k,\lambda'}^{n-1}(r_1)\neq0$ or $\vartheta_{k,\lambda'}^{n-1}(r_2)\neq0,$
we have $(f^l)_l\times_{\lambda'}\vartheta_{k,\lambda'}^{n-1}=0.$ Hence $(f^l)_l=0.$
When $l=0,$ the $l$-twisted spherical mean conditions $f \ast \mu_{r_i}=0,~i=1,2$
led to two radii theorem on $\mathbb{C}^n.$ Hence each Fourier coefficient
of $f$ is zero, and thus $f=0.$
\end{proof}

\begin{remark}
Since the twisted spherical mean of a M\'{e}tivier group is coherence
with a $3n$-dimensional M\'{e}tivier group, we could show one radius theorem for
$f\in L^p(G),~1\leq p\leq2.$ However, when we approach as similar to the Heisenberg group
\cite{T2} for $f\in L^p(G),~2< p<\infty,$ based on a summability result, we seek $L^p$-
boundedness of a multi-parameter singular integral,
whose kernel may fail to be a Calder\'{o}n-Zygmund kernel.
\end{remark}

\bigskip

\noindent{\bf Acknowledgements:} The first author would like to gratefully
acknowledge the support provided by IIT Guwahati, Government of India.

\bigskip

\end{document}